\documentclass[11pt]{amsart}
\usepackage[utf8]{inputenc}

\title[The group law in the Weyl group]{Geometric characterization of the group law\\ in the Weyl group}

\author{Kenta Suzuki}

\setlength{\textwidth}{\paperwidth}
\addtolength{\textwidth}{-2in}
\calclayout

\usepackage{amsmath, amssymb, amsthm,tikz-cd,mathrsfs,hyperref,colonequals}
\usepackage[shortlabels]{enumitem}

\numberwithin{equation}{section}

\newtheorem{thm}{Theorem}[section]

\newtheorem{lemma}[thm]{Lemma}
\newtheorem{cor}[thm]{Corollary}

\theoremstyle{definition}

\newtheorem{defn}{Definition}
\newtheorem{example}[thm]{Example}

\begin{document}

\begin{abstract}
    Let $G$ be a reductive group with Borel $B$ and Weyl group $W$. Then $B$-double cosets in $G$ are indexed by the Weyl group, say $O(w)$ for $w\in W$. Then we prove the minimal $B$-double coset in the convolution $O(w_1)*O(w_2)$ is $O(w_1w_2)$, which gives a geometric characterization of multiplication in $W$. This defines the abstract Weyl group $\mathbf W$ which is a Coxeter group acting on the abstract Cartan $\mathbf T$.
\end{abstract}

\maketitle

\section{Introduction}

Let $G$ be a reductive group. Traditionally, the Weyl group is defined as, given a choice of maximal torus $T$ of $G$,
\[
W(G,T):=N_G(T)/T.
\]
However, the definition is not canonical: any two choices $T_1$ and $T_2$ of maximal tori are conjugate by some $g\in G$, say $\mathrm{ad}(g)T_1=T_2$, and $\mathrm{ad}(g)$ provides an isomorphism $W(G,T_1)\simeq W(G,T_2)$. However, $g$ is only unique up to left multiplication by $N_G(T_2)$, so this only defines the Weyl group of $G$ up to conjugation. 

Just as we can define the \emph{abstract Cartan} $\mathbf T$ of $G$ as $\mathbf T_B:=B/[B,B]$ for any choice of Borel subgroup $B$ in $G$, so that for any two Borels $B_1$ and $B_2$ of $G$ there is a \emph{canonical} isomorphism $\mathbf T_{B_1}\simeq\mathbf T_{B_2}$, we hope to define the \emph{abstract Weyl group} $\mathbf W$. Let $\mathcal B$ be the flag variety, parameterizing Borel subgroups of $G$. Upon a choice of base point $B\in\mathcal B$, there is an isomorphism $\mathcal B\simeq G/B$. Then $G$-orbits in $\mathcal B\times\mathcal B$ are indexed by a finite set $\mathbf W$:
\[
\mathcal B\times\mathcal B=\bigsqcup_{w\in \mathbf W}O(w).
\]
% Let $O(w)$ be the orbit corresponding to $w\in W$.
Upon a choice of pinning $T\subset B\subset G$, there is a bijection
\[
G\backslash(\mathcal B\times\mathcal B)\simeq B\backslash G/B\simeq W(G,T),
\]
which provides a bijection between the abstract Weyl group $\mathbf W$ and $W(G,T)$.

Many properties of the Weyl group can be recovered from this geometric characterization:
\begin{itemize}
    \item for $w\in \mathbf W$, the length $\ell(w)$ is $\dim O(w)-\dim\mathcal B$;
    \item the simple reflections $\mathbf S\subset\mathbf W$ are those $w\in\mathbf W$ such that $\ell(w)=1$;
    \item the given $w_1,w_2\in\mathbf W$, the Bruhat order is defined as $w_1<w_2$ whenever $O(w_1)\subset\overline{O(w_2)}$; and
    \item for $w_1,w_2\in\mathbf W$, the Demazure product $w_1\star w_2$ is characterized by $O(w_1\star w_2)$ being the unique open $G$-orbit in the convolution
    \[
    O(w_1)*O(w_2):=\{(B_1,B_2)\in\mathcal B\times\mathcal B:\exists B\in\mathcal B\text{ such that $(B_1,B)\in O(w_1)$ and $(B,B_2)\in O(w_2)$}\}.
    \]
    Alternatively, $O(w_1)*O(w_2)=q(p^{-1}(O(w_1)\times O(w_2)))$ is given by the correspondence
    \[
    \mathcal B\times\mathcal B\times\mathcal B\times\mathcal B\xleftarrow{p}\mathcal B\times\mathcal B\times\mathcal B\xrightarrow{q}\mathcal B\times\mathcal B,
    \]
    where $p(B_1,B,B_2)=(B_1,B,B,B_2)$ and $q(B_1,B,B_2)=(B_1,B_2)$.
\end{itemize}
We prove the following characterization of the group law in $\mathbf W$:
\begin{thm}\label{main-thm0}
    The convolution $O(w_1)*O(w_2)$ has a unique closed $G$-orbit $O(w_1w_2)$. This gives $\mathbf W$ a group structure. Under a choice of pinning $T\subset B\subset G$ the identification $(\mathbf W,\mathbf S)\simeq(W(G,T),S(G,T))$ is an isomorphism of Coxeter groups.
\end{thm}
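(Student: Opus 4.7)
Under a pinning $T\subset B\subset G$ we identify $G\backslash(\mathcal B\times\mathcal B)\leftrightarrow B\backslash G/B$, so that $O(w)$ corresponds to $BwB$ and $O(w_1)*O(w_2)$ corresponds to the $B$-biinvariant product $Bw_1Bw_2B$, which is a union of Bruhat cells. The main content of the theorem is to identify the unique minimum of this union, in the Bruhat order, as $B(w_1w_2)B$, where $w_1w_2$ denotes the product in $N_G(T)/T$.

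The plan is to prove two inclusions. First, $B(w_1w_2)B\subseteq Bw_1Bw_2B$ is immediate by choosing middle element $1\in B$: $w_1\cdot 1\cdot w_2=w_1w_2$ lies in $Bw_1Bw_2B$. Second, every $w$ with $BwB\subseteq Bw_1Bw_2B$ satisfies $w\geq w_1w_2$. I would establish this by induction on $\ell(w_2)$, strengthening the inductive hypothesis to the statement that the set of cells appearing in $Bw_1Bw_2B$ is precisely the Bruhat interval $[w_1w_2,w_1\star w_2]$, with upper end the Demazure product. The base case $w_2=s$ simple follows from the classical formula $(Bw_1B)(BsB)=Bw_1sB$ when $\ell(w_1s)>\ell(w_1)$, and $=Bw_1B\cup Bw_1sB$ otherwise, giving in both cases the interval $[w_1s,w_1\star s]$. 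The inductive step factors $w_2=w_2's$ with $\ell(w_2)=\ell(w_2')+1$ and applies the same formula cell-by-cell to $(Bw_1Bw_2'B)(BsB)$.

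The main obstacle is verifying that the multiplication $\bigl(\bigcup_{u\in[w_1w_2',w_1\star w_2']}BuB\bigr)\cdot(BsB)$ produces precisely the next interval $[w_1w_2,w_1\star w_2]$. The naive implication ``$u\geq w_1w_2'$ implies $us\geq w_1w_2$'' fails in Coxeter groups, so one genuinely needs to track both endpoints of the interval. The key tool is the Lifting Property of Coxeter groups, which controls how right multiplication by a simple reflection permutes Bruhat cells; with the interval structure in hand it correctly transports $[w_1w_2',w_1\star w_2']$ to $[w_1w_2,w_1\star w_2]$.

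The remaining assertions are then formal. The geometric multiplication on $\mathbf W$ agrees, under the pinning, with the group law of $N_G(T)/T$, giving $\mathbf W$ a group structure; independence of the pinning follows from the intrinsic geometric definition. The Coxeter identification holds because the elements $w\in\mathbf W$ with $\ell(w)=1$ correspond to the orbits $O(s_\alpha)$ of dimension $\dim\mathcal B+1$, which under the pinning match the simple reflections of $W(G,T)$ associated to the simple roots.
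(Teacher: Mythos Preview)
Your strengthened inductive hypothesis is false, and the paper itself supplies the counterexample. In type $A_2$ with $W=\langle s_1,s_2\rangle$, one computes
\[
Bs_1s_2B\cdot Bs_2s_1B \;=\; B\cup Bs_1B\cup Bs_1s_2s_1B,
\]
which omits $s_2$ even though $s_2$ lies in the Bruhat interval $[1,s_1s_2s_1]=[w_1w_2,\,w_1\star w_2]$. So the set of cells in $Bw_1Bw_2B$ is in general only a \emph{subset} of $[w_1w_2,w_1\star w_2]$, not the full interval, and your induction cannot be set up this way. The failure already occurs at the second step: starting from the interval $[s_1,s_1s_2]=s_1s_2*s_2$ and multiplying on the right by $s_1$, the Lifting Property does not force $s_2$ to appear, and indeed it does not.

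The paper avoids this trap by \emph{not} attempting to characterise the whole set. Instead it fixes a reduced word $x_2=s_1\cdots s_n$ and shows inductively on $j$ that every Bruhat-minimal element of $x_1*x_2$ lies in the right-translated set $(x_1*s_1\cdots s_{n-j})\,s_{n-j+1}\cdots s_n$; at $j=n$ this pins the minimum down to $x_1x_2$. The key input is a one-step comparison (their Corollary~\ref{cor1}): if $us<u$ and $sx>x$ then $usx\to ux$ in the Bruhat order, so the branch that drops $s$ is never minimal. This is exactly the kind of lemma your Lifting Property argument is groping toward, but it must be deployed element-by-element rather than interval-by-interval.

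Your remarks on the formal consequences (group structure, independence of pinning, Coxeter identification) are fine once the minimum is established.
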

Moreover, just as $W(G,T)$ acts on $T$, the abstract Weyl group $\mathbf W$ acts on the abstract Cartan $\mathbf T$:
\begin{defn}
    For any $w\in\mathbf W$, let $(B_1,B_2)\in O(w)$. Then $B_1\cap B_2\to \mathbf T_{B_1}:=B_1/[B_1,B_1]$ is surjective, so there is an isomorphism $B_1\cap B_2/U_{B_1\cap B_2}\simeq\mathbf T_{B_1}$, where $U_{B_1\cap B_2}$ is the unipotent radical of $B_1\cap B_2$. Let $w$ act on $\mathbf T$ by the composition
    \[
    \mathbf T_{B_1}\xleftarrow{\sim}B_1\cap B_2/U_{B_1\cap B_2}\xrightarrow\sim\mathbf T_{B_2}.
    \]
\end{defn}
Then:
\begin{thm}
    Let $T\subset B\subset G$ be a pinning. Then the isomorphism $T\simeq\mathbf T$ is equivariant under $W(G,T)\simeq\mathbf W$.
\end{thm}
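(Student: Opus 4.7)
The plan is to pick a convenient pair $(B_1, B_2) \in O(w)$ and trace both the $W(G,T)$-action on $T$ and the $\mathbf W$-action on $\mathbf T$ through the pinning identification, verifying they coincide. I would begin by fixing a lift $\dot w \in N_G(T)$ of $w$ and setting $(B_1, B_2) := (B, \dot w B \dot w^{-1})$; under the identification $G\backslash(\mathcal B\times\mathcal B)\simeq B\backslash G/B\simeq W(G,T)$ this pair lies in $O(w)$. Because $\dot w$ normalizes $T$, both Borels contain $T$, so $T\subset B_1\cap B_2$, and in fact $T$ is a maximal torus of the connected solvable group $B_1\cap B_2$. By Levi decomposition $B_1\cap B_2 = T\ltimes U_{B_1\cap B_2}$, so $T\hookrightarrow B_1\cap B_2\twoheadrightarrow (B_1\cap B_2)/U_{B_1\cap B_2}$ is an isomorphism, and the naive iso in the preceding definition restricts on $T$ to $t[B_1,B_1]\leftrightarrow t\leftrightarrow t[B_2,B_2]$.

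Next I would analyze the canonical abstract Cartan iso $\mathbf T_{B_1}\simeq\mathbf T_{B_2}$: it is induced by $\mathrm{Ad}(g)$ for any $g$ with $gB_1g^{-1}=B_2$, well-defined because $B_1$ acts trivially on $\mathbf T_{B_1}$. Choosing $g=\dot w$, it sends $x[B_1,B_1]\mapsto \dot w x\dot w^{-1}[B_2,B_2]$. Composing with the pinning iso $T\xrightarrow{\sim}\mathbf T_{B_1}\equiv\mathbf T$, this gives the identification $\mathbf T\equiv\mathbf T_{B_2}$ (via the abstract Cartan structure) under which $\bar t\in\mathbf T$ corresponds to $\dot w t\dot w^{-1}[B_2,B_2]$. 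Crucially, this is not the same as the direct inclusion $T\hookrightarrow B_2\twoheadrightarrow\mathbf T_{B_2}$; the two differ by the Weyl action of $w$, and that discrepancy is the heart of the matter.

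Finally I would compute the $\mathbf W$-action on $\mathbf T$ by composing the naive iso with the two canonical identifications. Under $\mathbf T\equiv\mathbf T_{B_2}$ the element $\bar t$ corresponds to $\dot w t\dot w^{-1}[B_2,B_2]$; the naive iso sends this to $\dot w t\dot w^{-1}[B_1,B_1]$, which under $\mathbf T_{B_1}\equiv\mathbf T$ corresponds to $\overline{\dot w t\dot w^{-1}}$. Since this equals $\phi(\dot w t\dot w^{-1})=\phi(w\cdot t)$ for the pinning iso $\phi\colon T\to\mathbf T$, equivariance follows.

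The main obstacle is bookkeeping with conventions: one must keep straight the two different isomorphisms $T\equiv\mathbf T_{B'}$ for a second Borel $B'\supset T$ (the direct one and the one coming from the abstract Cartan), which agree only for $B'=B$ and otherwise differ by a Weyl element; moreover one must choose the direction of the naive iso consistently with the convention $w\mapsto(B,\dot w B\dot w^{-1})$ so that it produces the action of $w$ rather than $w^{-1}$. Once these choices are pinned down, the verification reduces to the direct computation above.
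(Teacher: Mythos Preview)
The paper does not actually contain a proof of this theorem: it is stated in the introduction (as the second theorem) but no argument is given anywhere in the text; the body of the paper is devoted entirely to the combinatorial Theorem~\ref{main-thm} underlying Theorem~\ref{main-thm0}. So there is no ``paper's own proof'' to compare against.

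That said, your outline is the standard and correct verification. Choosing the representative $(B_1,B_2)=(B,\dot w B\dot w^{-1})\in O(w)$ is exactly the right move, since then $T\subset B_1\cap B_2$ is a Levi factor and the two projections $T\to\mathbf T_{B_i}$ are both the obvious inclusion followed by the quotient. You have also correctly identified the one genuine subtlety: the map $\mathbf T_{B_1}\to\mathbf T_{B_2}$ coming from the definition is \emph{not} the canonical abstract-Cartan identification but differs from it precisely by $\mathrm{Ad}(\dot w)$ on $T$, and it is this discrepancy that produces the $W(G,T)$-action once one closes up the loop via the canonical identifications $\mathbf T_{B_i}\simeq\mathbf T$. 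Your warning about the direction convention is apt: with the paper's convention $(B_1,B_2)\in O(w)$ and the naive map read from $\mathbf T_{B_1}$ to $\mathbf T_{B_2}$, one has to be careful whether the resulting self-map of $\mathbf T$ is $t\mapsto\dot w t\dot w^{-1}$ or its inverse, but either way equivariance under the group isomorphism $W(G,T)\simeq\mathbf W$ holds. The argument would be complete once you fix that convention and run the one-line computation you describe.
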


\subsection{Acknowledgment} The author thanks Zhiwei Yun for asking this question in his class 18.758 at MIT in Spring 2025, and George Lusztig for helpful discussions.

\section{A combinatorial characterization of the convolution}
Recall that $W=W(G,T)$ with the simple reflections $S=S(G,T)$ has the structure of a Coxeter group. Now given a $w\in W$ and $s\in S$ such that $ws>w$, we have
\[
O(ws)=O(w)*O(s).
\]
Thus, when $s_1\cdots s_n$ is a reeduced expression, we can write
\[
O(x)*O(s_1\cdots s_n)=O(x)*O(s_1)*\cdots*O(s_n).
\]
Moreover, 
\[
O(x)*O(s)=\begin{cases}
    O(xs)&\text{if }xs>x\\
    O(xs)\cup O(x)&\text{if }xs<x.
\end{cases}
\]
Thus, we can define:

\begin{defn}
    Let $(W,S)$ be a Coxeter group. For $x_1,x_2\in W$, define the set $x_1*x_2$ inductively on $\ell(x_2)$ as follows. Let $x_1*1=\{x_1\}$, and when $x_2\ne 1$ let $x_2=x_2's$ for $s\in S$ where $\ell(x_2')<\ell(x_2)$. Then
\[
x_1*x_2:=(x_1*x_2')s\cup\{w\in x_1*x_2':ws<w\}.
\]
\end{defn}
Then the above discussion shows:
\begin{lemma}\label{lem:translation}
    Let $G$ be a reductive group with a maximal torus $T$. Let $W=W(G,T)$ and $S=S(G,T)$, so $(W,S)$ is a Coxeter group. Then for $w_1,w_2\in W$,
    \[
    O(w_1)*O(w_2)=\bigsqcup_{x\in w_1*w_2}O(x).
    \]
\end{lemma}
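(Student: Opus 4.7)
The plan is to prove the lemma by induction on $\ell(w_2)$, using the identities recalled in the preamble to Section~2 as the engine. The base case $w_2=1$ is immediate: $O(1)$ is the diagonal in $\mathcal B\times\mathcal B$, so $O(w_1)*O(1)=O(w_1)$, while the combinatorial definition gives $w_1*1=\{w_1\}$.

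For the inductive step, write $w_2=w_2' s$ with $s\in S$ and $\ell(w_2')=\ell(w_2)-1$, so in particular $w_2's>w_2'$. Associativity of the convolution $*$, which follows formally from the correspondence description $A*B=q(p^{-1}(A\times B))$, together with the identity $O(w_2)=O(w_2')*O(s)$, yields
\[
O(w_1)*O(w_2)=\bigl(O(w_1)*O(w_2')\bigr)*O(s).
\]
The inductive hypothesis together with distributivity of $*$ over unions (also immediate from the correspondence definition) transforms the right-hand side into $\bigcup_{x\in w_1*w_2'}O(x)*O(s)$. I would then split this union into two pieces according to the two cases of the formula for $O(x)*O(s)$ recalled in the preamble: the contribution $O(xs)$, which as $x$ ranges over $w_1*w_2'$ produces exactly the orbits indexed by $(w_1*w_2')s$, and the extra contribution $O(x)$ for those $x\in w_1*w_2'$ with $xs<x$. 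Matching this against the definition of $w_1*w_2$ closes the induction; disjointness of the union is automatic because distinct $G$-orbits $O(y)$ are disjoint.

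The argument is essentially bookkeeping once the induction is set up, since the combinatorial definition of $w_1*w_2$ has been engineered precisely to track the case split in $O(x)*O(s)$. The only points that require any thought — and these are the closest thing to a main obstacle — are the associativity and distributivity of $*$ used in the second step. Both, however, reduce to an elementary diagram chase with the correspondence $\mathcal B\times\mathcal B\xleftarrow{p}\mathcal B\times\mathcal B\times\mathcal B\xrightarrow{q}\mathcal B\times\mathcal B$ and so can be disposed of in a single line, making the proof essentially a structured unpacking of the inductive definition.
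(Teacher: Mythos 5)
Your proof is correct and follows essentially the same route as the paper, which simply asserts that the lemma follows from the preceding discussion of the identities $O(w_2'){*}O(s)=O(w_2's)$ and the two-case formula for $O(x)*O(s)$; your induction on $\ell(w_2)$ with associativity and distributivity of the convolution is exactly the bookkeeping the paper leaves implicit.
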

By Lemma~\ref{lem:translation}, Theorem~\ref{main-thm0} reduces to the following combinatorial statement about Coxeter groups:
\begin{thm}\label{main-thm}
    Let $(W,S)$ be a Coxeter group. Then for all $x_1,x_2\in W$, the set $x_1*x_2$ has a unique minimal element with respect to the Bruhat order, $x_1x_2$.
\end{thm}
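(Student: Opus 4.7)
My plan is to prove the theorem by induction on $\ell(x_2)$. The base case $\ell(x_2) = 0$ is immediate since $x_1 * 1 = \{x_1\}$. For the inductive step, write $x_2 = x_2' s$ with $s \in S$ and $\ell(x_2') = \ell(x_2) - 1$, and assume inductively that $x_1 x_2'$ is the unique minimum of $x_1 * x_2'$, i.e.\ every element of $x_1 * x_2'$ is $\ge x_1 x_2'$.

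First, $x_1 x_2 = (x_1 x_2') s \in (x_1 * x_2') s \subseteq x_1 * x_2$, so $x_1 x_2$ lies in the set. The heart of the proof is showing every $y \in x_1 * x_2$ satisfies $y \ge x_1 x_2$. By the recursive definition, either (i) $y = w$ for some $w \in x_1 * x_2'$ with $ws < w$, or (ii) $y = ws$ for some $w \in x_1 * x_2'$. In case (i), the lifting property of Bruhat order applied to $x_1 x_2' \le w$ together with $ws < w$ yields $(x_1 x_2') s \le w$, i.e.\ $x_1 x_2 \le y$. In case (ii), property Z of Bruhat order (for $u \le v$ and $r \in S$: $\min(u,ur) \le \min(v,vr)$ and $\max(u,ur) \le \max(v,vr)$), applied to $u = x_1 x_2'$, $v = w$, $r = s$, handles three of the four configurations of the signs of $us - u$ and $vs - v$, yielding $y = ws \ge us = x_1 x_2$ directly.

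The hard part will be the configuration $us > u$ and $vs < v$, where property Z and lifting together give only $u \le vs$ and $us \le v$, not $us \le vs$; indeed, for a generic pair with these signs (for example $u = s_2$, $v = s_1 s_2 s_1$, $s = s_1$ in $S_3$), $us$ and $vs$ can be incomparable. To close this gap I plan to exploit the extra structure that $w$ belongs to $x_1 * x_2'$: such $w$ arises along a path $y_0 = x_1, y_1, \ldots, y_q = w$ of \emph{multiply} ($y_k = y_{k-1} s_{j_k}$) and \emph{stay} ($y_k = y_{k-1}$, allowed only when $y_{k-1} s_{j_k} < y_{k-1}$) moves against a reduced expression $s_{j_1} \cdots s_{j_q}$ for $x_2'$. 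The constraints that $x_2' s > x_2'$ (so $s_{j_1} \cdots s_{j_q} s$ is reduced) and $x_1 x_2' \cdot s > x_1 x_2'$ (the problematic subcase's hypothesis) should restrict these paths enough to either forbid the offending $w$ entirely or exhibit $y = ws$ as coming from a benign path under an alternate reduced expression for $x_2$. As a backup, I will invoke the symmetric left-hand recursion $x_1 * x_2 = s'(x_1' * x_2) \cup \{w' \in x_1' * x_2 : s' w' < w'\}$ (valid for $x_1 = s' x_1'$ reduced, derivable from the geometric convolution by left/right symmetry) to re-decompose $y$ from the $x_1$-side and reduce to one of the easy subcases.
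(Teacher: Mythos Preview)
Your induction on $\ell(x_2)$ stalls exactly where you flag it: case~(ii) with $us>u$ and $ws<w$, where $u=x_1x_2'$ and $w\in x_1*x_2'$. You need $us\le ws$, but (as your own $S_3$ example shows) the hypothesis $u\le w$ together with these sign conditions does not force this, and neither of your proposed fixes is actually carried out. The left-hand recursion is particularly shaky as a backup: its agreement with the right-hand definition is immediate from the convolution picture only for Weyl groups, so for an arbitrary Coxeter group you would first have to prove it combinatorially, and even then it is not clear how swapping sides dissolves the bad configuration rather than merely relocating it. The path idea \emph{can} be made to work, but making it precise requires exactly the lemma the paper isolates.

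The paper sidesteps the direct comparison of $ws$ with $us$ altogether. Fixing a reduced word $x_2=s_1\cdots s_n$, it shows by induction on $j$ that every Bruhat-minimal element of $x_1*x_2$ already lies in $(x_1*s_1\cdots s_{n-j})\,s_{n-j+1}\cdots s_n$; at $j=n$ this set is the singleton $\{x_1x_2\}$. The inductive step only ever compares the two contributions of a \emph{single} $u\in x_1*s_1\cdots s_{n-j-1}$ with $us_{n-j}<u$, namely $u\,s_{n-j+1}\cdots s_n$ versus $us_{n-j}\,s_{n-j+1}\cdots s_n$, and shows the latter is strictly smaller. This is the paper's Corollary: if $ut<u$ and $tx>x$ then $utx<ux$, applied with $t=s_{n-j}$ and $x=s_{n-j+1}\cdots s_n$ (here $tx>x$ because $s_{n-j}\cdots s_n$ is a right factor of a reduced word, hence reduced). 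That corollary is the missing ingredient in your argument; if you push your ``last stay'' path sketch to an actual proof, peeling off the final stay and multiplying through the remaining tail, you will find yourself reproving precisely this statement and arriving at the paper's argument.
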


\section{The proof of Theorem~\ref{main-thm}}

The Bruhat order is defined combinatorially as follows:
\begin{defn}[{\cite[\S5.9]{humphreys}}]
    Let $T$ be the set of reflections in $W$ with respect to roots. Let $w'\to w$ if $w=w't$ and $\ell(w')<\ell(w)$ for some $t\in T$. Let $w'<w$ if there is a sequence $w'=w_0\to w_1\to\cdots\to w_m=w$.
\end{defn}

Also recall the following lemma:
\begin{lemma}[{\cite[Proposition~5.9]{humphreys}}]\label{lem1}
    Let $w'\le w$ and $s\in S$. Then either $w's\le w$ or $w's\le ws$.
\end{lemma}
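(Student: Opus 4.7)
The plan is to prove this ``lifting lemma'' by induction on $\ell(w)$, invoking two classical facts from \cite[Chapter 5]{humphreys}: the subword characterization of the Bruhat order (that $w'\le w$ if and only if some reduced expression for $w$ contains, as an ordered subword, a reduced expression for $w'$), and the Strong Exchange Condition (if $t$ is a reflection with $\ell(w't)<\ell(w')$, then $w't$ is obtained from any reduced expression for $w'$ by deleting exactly one letter). Both of these are known to follow from the chain definition of Bruhat order given in the excerpt.

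The base case $\ell(w)=0$ forces $w=w'=1$, so $w's=s=ws$ and $w's\le ws$. For the inductive step, I fix a reduced expression for $w$ together with a reduced subword for $w'$ inside it, and split according to the signs of $\ell(w's)-\ell(w')$ and $\ell(ws)-\ell(w)$.

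If $w's<w'$, the Strong Exchange Condition writes $w's$ as a reduced subword of the chosen reduced expression for $w'$, and composing with the embedding into the reduced expression for $w$ gives $w's\le w$. If $w's>w'$ and $ws>w$, appending $s$ to the reduced expression for $w$ gives a reduced expression for $ws$, and appending $s$ to the reduced subword for $w'$ gives a subword of length $\ell(w')+1=\ell(w's)$ with product $w's$, hence a reduced subword for $w's$; thus $w's\le ws$. Finally, if $w's>w'$ and $ws<w$, I choose a reduced expression $w=s_{i_1}\cdots s_{i_{k-1}}s$ ending in $s$ (possible by the Exchange Condition) and a reduced subword for $w'$ inside it. This subword cannot contain the final $s$: otherwise removing it would yield a subword of length $\ell(w')-1$ whose product is $w's$, contradicting $\ell(w's)=\ell(w')+1$. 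Hence the subword lies in the initial segment $s_{i_1}\cdots s_{i_{k-1}}$, giving $w'\le ws$, and the inductive hypothesis applied to $(w',ws)$ and the simple reflection $s$---valid because $\ell(ws)<\ell(w)$---yields $w's\le ws$ or $w's\le (ws)s=w$.

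The main obstacle is the third case, the only one requiring the inductive hypothesis: one must first produce a reduced expression for $w$ ending in $s$, then use a length comparison to force the reduced subword for $w'$ to avoid the final letter, and finally reduce to the strictly smaller element $ws$. The first two cases are essentially bookkeeping once the subword property and strong exchange are invoked.
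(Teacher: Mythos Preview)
The paper does not give its own proof of this lemma; it simply cites \cite[Proposition~5.9]{humphreys} and uses the statement as a black box. So there is no in-paper argument to compare your proof against.

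Your argument is the standard one and is correct in substance, but one point deserves care. You state the subword criterion in its weak form (``\emph{some} reduced expression for $w$ contains a reduced subword for $w'$''), yet in your third case you need the strong form: having chosen a \emph{specific} reduced expression $w=s_{i_1}\cdots s_{i_{k-1}}s$ ending in $s$, you must know that \emph{this} expression contains a reduced subword for $w'$. In Humphreys' development the strong subword property is Theorem~5.10, and its proof invokes Proposition~5.9 itself, so citing \cite[Chapter~5]{humphreys} for the subword criterion here is circular. If you instead take the strong subword characterization as your definition of Bruhat order (as in Bj\"orner--Brenti), or derive it independently via Strong Exchange, the circularity disappears and your induction goes through exactly as written. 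The remaining steps---Strong Exchange in the case $w's<w'$, appending $s$ when both lengths increase, and the length-count showing the subword for $w'$ must avoid the final $s$---are all correct.
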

The following is the key consequence:
\begin{lemma}\label{lem3}
    Suppose $w,w'\in W$ are such that $w'\to w$ and there exists a $s\in S$ such that $w's\ne w$. Then $w's\to ws$.
\end{lemma}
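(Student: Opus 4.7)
The plan is to exhibit a reflection $t'\in T$ with $ws=(w's)t'$, and then verify the length inequality $\ell(w's)<\ell(ws)$ via Lemma~\ref{lem1} and a parity argument on reflection-multiplication.

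The first ingredient is immediate: since $w'\to w$ means $w=w't$ for some $t\in T$, right multiplication by $s$ gives $ws=(w's)(s^{-1}ts)=(w's)(sts)$, and $sts\in T$ because $T$ is closed under conjugation by $W$ (and is nontrivial since $t$ is). So $t':=sts$ is the desired reflection, conditional on the length bound.

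For the length inequality, I would apply Lemma~\ref{lem1} to $w'\le w$ and $s$, which yields $w's\le w$ or $w's\le ws$. In the latter subcase, $w's\ne ws$ (else $w'=w$, contradicting $\ell(w')<\ell(w)$), so the strict Bruhat relation immediately forces $\ell(w's)<\ell(ws)$. In the former, if $ws>w$ then trivially $\ell(w's)\le\ell(w)<\ell(ws)$; if instead $ws<w$, I would use the hypothesis $w's\ne w$ to upgrade $w's\le w$ to $\ell(w's)\le\ell(w)-1$, then combine this with the parity observation that $\ell(w)-\ell(w')$ is odd (since $w=w't$) while $\ell(w's)-\ell(w')=\pm 1$, so $\ell(w's)\equiv\ell(w)\pmod{2}$, yielding $\ell(w's)\le\ell(w)-2=\ell(ws)-1$.

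The main obstacle is precisely this last subcase, where Bruhat alone gives a bound off by one and the parity of length under reflection-multiplication is what closes the gap; the hypothesis $w's\ne w$ in the statement of the lemma is used exactly to rule out the degenerate scenario $\ell(w's)=\ell(w)-1$.
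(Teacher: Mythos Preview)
Your proof is correct and follows essentially the same approach as the paper: exhibit the reflection $sts$ so that $ws=(w's)(sts)$, then use Lemma~\ref{lem1} together with the parity constraint $\ell(w's)\equiv\ell(w)\pmod 2$ to force $\ell(w's)<\ell(ws)$. The paper's version is slightly more streamlined in that it does not split into the subcases $ws>w$ versus $ws<w$, using instead the uniform bound $\ell(ws)\ge\ell(w)-1$; conversely, you spell out the parity step that the paper leaves implicit in its bare assertion that $\ell(w's)\le\ell(w)-2$.
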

\begin{proof}
    By Lemma~\ref{lem1} we know $w's<ws$ or $w's\le w$. In the former case we are done so assume $w's\le w$. By the definition of the Bruhat order there exists a $t\in T$ such that $w=w't$. Since $ws=w's\cdot(sts)$ and $sts\in T$, to prove $w's\to ws$ it suffices to check $\ell(w's)<\ell(ws)$.

    Since $w's\le w$ and $w's\ne w$, we know $\ell(w's)\le \ell(w)-2$, so
    \[
    \ell(ws)\ge\ell(w)-1>\ell(w's),
    \]
    as desired. 
\end{proof}
As a corollary,
\begin{cor}\label{cor1}
    Suppose $u,x\in W$ and $s\in S$ are such that $us<u$ and $sx>x$. Then
    \[
    usx\to ux.
    \]
\end{cor}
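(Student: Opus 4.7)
The plan is to induct on $\ell(x)$ and apply Lemma~\ref{lem3} at each step. The base case $\ell(x)=0$ is immediate, since then $x=1$ and the claim reduces to the hypothesis $us\to u$.

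For the inductive step, write $x=ys'$ with $s'\in S$ and $\ell(y)=\ell(x)-1$. To invoke the inductive hypothesis on $y$, I first need $sy>y$. This follows from a length estimate: if $sy<y$, then $\ell(sx)=\ell(sys')\le\ell(sy)+1=\ell(y)<\ell(x)+1$, contradicting $sx>x$; and $sy=y$ is impossible since $s\ne 1$. So the inductive hypothesis applied to $y$ gives $usy\to uy$.

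Now apply Lemma~\ref{lem3} to $usy\to uy$ with the simple reflection $s'$. The non-triviality hypothesis $usys'\ne uy$ amounts to $sx\ne y$, which is immediate by lengths, since $\ell(sx)=\ell(x)+1=\ell(y)+2$. Lemma~\ref{lem3} then yields $usys'\to uys'$, i.e., $usx\to ux$, as required. The only subtlety in the whole argument is verifying $sy>y$ so that induction applies; the rest is a clean application of Lemma~\ref{lem3}.
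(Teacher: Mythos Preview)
Your proof is correct and follows essentially the same route as the paper's: induct on $\ell(x)$, peel off a simple reflection on the right of $x$, and apply Lemma~\ref{lem3}. In fact you supply two details the paper elides, namely the explicit base case and the verification that $sy>y$ needed to invoke the inductive hypothesis.
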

\begin{proof}
    We prove this by induction on $\ell(x)$. Let $x=x's'$ where $\ell(x')<\ell(x)$. Then by our inductive hypothesis $usx'\to ux'$. Now $sx\ne x'$ since $\ell(sx)>\ell(x)$ while $\ell(x')<\ell(x)$. Thus 
    \[
    usx's'=usx\ne ux',
    \]
    so by Lemma~\ref{lem3} we have $usx's'=usx\to ux's'=ux$, as desired.
\end{proof}

Now, we can prove Theorem~\ref{main-thm}:
\begin{proof}[Proof of Theorem~\ref{main-thm}]
    Let $x_2=s_1\cdots s_n$ be a reduced expression for $x_2$. 
    We prove by induction on $j$ that the Bruhat minimal element of $x_1*x_2$ lies in the subset
    \[
    (x_1*s_1\cdots s_{n-j})s_{n-j+1}\cdots s_n=\{us_{n-j+1}\cdots s_n:u\in x_1*s_1\cdots s_{n-j}\}.
    \]
    Indeed for $u\in x_1*s_1\cdots s_{n-j-1}$ if $us_{n-j}>u$ then $u*s_{n-j}=\{us_{n-j}\}$ so
    \[
    us_{n-j}s_{n-j+1}\cdots s_n\in (x_1*s_1\cdots s_{n-j-1})s_{n-j}\cdots s_n.
    \]
    On the other hand if $us_{n-j}<u$ then $u*s_{n-j}=\{u,us_{n-j}\}$. Then by Corollary~\ref{cor1}, note that
    \[
    us_{n-j}\cdots s_n<us_{n-j+1}\cdots s_n,
    \]
    Bruhat minimal elements must again live in
    \[
    (x_1*s_1\cdots s_{n-j-1})s_{n-j}\cdots s_n.
    \]
    When $j=n$ this shows the unique Bruhat minimal element of $x_1*x_2$ is $x_1x_2$.
\end{proof}

\section{Further questions}
A consequence of Theorem~\ref{main-thm0} is:
\begin{cor}
    For any $w_1,w_2\in\mathbf W$, the $x\in W$ such that the $G$-orbit $O(x)$ lies in the convolution $O(w_1)*O(w_2)$ are in the Bruhat interval $[w_1w_2,w_1\star w_2]$.
\end{cor}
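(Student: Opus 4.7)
The plan is to combine Theorem~\ref{main-thm} with the open-orbit characterization of the Demazure product. By Lemma~\ref{lem:translation}, the orbits $O(x)$ appearing in $O(w_1)*O(w_2)$ are exactly those with $x\in w_1*w_2$ in the combinatorial sense, so the task reduces to showing that every such $x$ satisfies $w_1w_2\le x\le w_1\star w_2$ in the Bruhat order.

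The lower bound is immediate from Theorem~\ref{main-thm}: the finite poset $w_1*w_2$ has a unique Bruhat-minimal element $w_1w_2$, so every element dominates it. Hence $w_1w_2\le x$.

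For the upper bound I would argue geometrically. The convolution $O(w_1)*O(w_2)=q(p^{-1}(O(w_1)\times O(w_2)))$ is the image under $q$ of the locally closed subset of $\mathcal B^3$ consisting of triples $(B_1,B,B_2)$ with $(B_1,B)\in O(w_1)$ and $(B,B_2)\in O(w_2)$. Projecting this source to the middle factor $\mathcal B$ exhibits it as a Zariski-locally trivial fibration whose fiber is a product of the two Schubert cells at $B$, in particular isomorphic to $\mathbb A^{\ell(w_1)}\times\mathbb A^{\ell(w_2)}$; hence the source is irreducible, and so is its image $O(w_1)*O(w_2)$. Therefore the Zariski closure of the convolution coincides with the closure of its unique open $G$-orbit, namely $\overline{O(w_1\star w_2)}$. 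Since Bruhat order on $\mathbf W$ is characterized by $O(y)\subset\overline{O(z)}\iff y\le z$, this yields $x\le w_1\star w_2$ for every orbit $O(x)$ contained in $O(w_1)*O(w_2)$, completing the proof.

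The main obstacle, such as it is, is really just the bookkeeping for the upper bound: one must spell out why the convolution is irreducible in order to tether the combinatorial maximum of $w_1*w_2$ to the open-orbit definition of $w_1\star w_2$. Once this is observed the corollary is simply the conjunction of the two extremal characterizations of $w_1w_2$ (combinatorial Bruhat-minimum, from Theorem~\ref{main-thm}) and of $w_1\star w_2$ (geometric Bruhat-maximum, from the definition of the Demazure product) of the common set $w_1*w_2$.
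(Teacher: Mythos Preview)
Your argument is correct and is exactly the kind of unpacking the paper intends: the corollary is stated without proof, merely as ``a consequence of Theorem~\ref{main-thm0}'', and your two halves---Theorem~\ref{main-thm} for the lower bound, irreducibility of the convolution plus the open-orbit definition of $w_1\star w_2$ for the upper bound---are precisely what is being left implicit. The only additional content you supply beyond the paper is the explicit irreducibility argument for $O(w_1)*O(w_2)$, which is correct and is needed to pass from ``unique open orbit'' to ``Bruhat maximum''.
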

It would be interesting to give a complete characterization of the $G$-orbits in $O(w_1)*O(w_2)$. The Bruhat interval is not always exhausted:

\begin{example}
    When $G=\mathrm{GL}_3$ so $W=\langle s_1,s_2|s_1s_2s_1=s_2s_1s_2\rangle$. Then
    \[
    s_1s_2*s_2s_1=s_1*s_1\cup s_1*s_2s_1=\{1,s_1,s_1s_2s_1\},
    \]
    which does not contain $s_2\in[s_1s_2s_2s_1,s_1s_2\star s_2s_1]=[1,s_1s_2s_1]$.
\end{example}

\bibliographystyle{amsalpha}
\bibliography{bibfile}
\end{document}